

\documentclass[preprint,12pt]{elsarticle}




\usepackage{amssymb}

\usepackage{amssymb}
\usepackage{amsfonts}
\usepackage{amsmath}
\usepackage{amsbsy}
\usepackage{amsxtra}
\usepackage{amscd}
\usepackage{latexsym}
\usepackage{url}
\usepackage{enumerate}
\usepackage[active]{srcltx}
\usepackage{subfigure}

\newtheorem{theorem}{Theorem}
\newtheorem{lemma}{Lemma}

\newtheorem{remark}{Remark}

\newtheorem{problem}{Problem}

\def\argmin{\operatorname{argmin}}
\def\min{\operatorname{Minimize}}
\def\const{\operatorname{subject~to~}}

\DeclareMathOperator{\diag}{diag}
\DeclareMathOperator{\sgn}{sgn}

\newcommand{\lng}{\langle}
\newcommand{\rng}{\rangle}

\newcommand{\R}{\mathbb R}

\newcommand{\f}{\frac}
\newcommand{\ds}{\displaystyle}

\newenvironment{proof}{{\noindent\bf Proof.}}{\hfill$\Box$\\}



\journal{Linear Algebra and its Applications}

\begin{document}

\begin{frontmatter}


\title{Projection onto simplicial cones by Picard's method}


 \author[label1]{Jorge Barrios\fnref{flabel1}}
 \ead{numeroj@gmail.com} 
 \fntext[flabel1]{The author was supported in part by CAPES.}

 \author[label1]{Orizon  P. Ferreira\corref{cor1}\fnref{flabel2}}
 \ead{orizon@ufg.br}
 \cortext[cor1]{Corresponding author.}
 \fntext[flabel2]{The author was supported in part by
FAPEG, CNPq Grants 471815/2012-8,  305158/2014-7 and PRONEX--Optimization(FAPERJ/CNPq).}

 \author[label2]{S\'{a}ndor  Z. N\'{e}meth\fnref{flabel3}}
 \ead{nemeths@for.mat.bham.ac.uk}
 \fntext[flabel3]{The author was supported in part by the Hungarian Research Grant OTKA 60480.}

 \address[label1]{IME/UFG, Campus II- Caixa Postal 131, 
Goi\^ania, GO, 74001-970, Brazil}
 \address[label2]{School of Mathematics, The University of Birmingham, The Watson Building, Edgbaston, Birmingham B15 2TT, United Kingdom}

\begin{abstract}
By using Moreau's decomposition theorem for projecting onto cones, the problem of projecting onto a simplicial cone is reduced to finding the unique solution of a 
nonsmooth system of equations. It is shown that  Picard's method applied to the  system of equations associated  to the  problem of projecting onto a simplicial
cone  generates  a sequence  that converges linearly to the  solution of the  system. Numerical experiments are presented making the comparison between  Picard's and semi-smooth Newton's methods to solve the nonsmooth system associated with the problem of projecting a point onto a simplicial cone.
\end{abstract}

\begin{keyword}
Projection\sep simplicial cones\sep Moreau's decomposition theorem\sep Picard's method


\MSC   90C33\sep 15A48 \sep 90C20    

\end{keyword}

\end{frontmatter}


\section{Introduction} 
The interest in the subject of projection arises in several situations,   having a wide range of applications in pure and applied mathematics such as  Convex Analysis 
(see e.g. \cite{HiriartLemarecal1}),   Optimization (see  e.g. \cite{BuschkeBorwein96,censor07,censor01,Frick1997,scolnik08,ujvari2007projection}), Numerical Linear Algebra 
(see e.g. \cite{Stewart77}), Statistics  (see e.g. \cite{BerkMarcus96,Dykstra83,Xiaomi1998}), Computer Graphics (see e.g. \cite{Fol90} ) and  Ordered 
Vector Spaces (see e.g. \cite{AbbasNemeth2012,IsacNem86,IsacNem92,NemethNemeth2009,Nemeth20091,Nemeth2010-2}).   More specifically, the projection onto a polyhedral 
cone,  which has as a special case the projection onto a simplicial one, is a problem of high impact on scientific community\footnote{see the popularity of the 
Wikimization page Projection on Polyhedral Cone at www.convexoptimization.com/wikimization/index.php/Special:Popularpages}.  The geometric  nature of this problem 
makes it particularly interesting and important  in many areas of science and technology such as   Statistics~(see e.g. \cite{Xiaomi1998}), 
Computation~(see e.g. \cite{Huynh1992}), Optimization (see  e.g.\cite{Morillas2005,ujvari2007projection}) and Ordered Vector Spaces (see e.g. \cite{NemethNemeth2009}). 

The projection onto a  general simplicial cone is difficult and computationally expensive, this problem has been studied e.g. in \cite{AlSultanMurty1992,EkartNemethNemeth2009,Frick1997,MurtyFathi1982,NemethNemeth2009,ujvari2007projection}.  It is a special convex quadratic
program and its  KKT optimality conditions form the linear complementarity problem (LCP) associated with it,  see e.g
\cite{Murty1988,MurtyFathi1982,ujvari2007projection}.  Therefore, the problem of projecting onto  simplicial cones can be solved by  active set
methods \cite{Bazaraa2006,LiuFathi2011,LiuFathi2012,Murty1988} or any algorithms for solving LCPs,  see e.g  \cite{Bazaraa2006,Murty1988} and
special methods based on its geometry,   see e.g \cite{MurtyFathi1982,Murty1988}. Other fashionable ways to solve this problem are based on
the classical von Neumann algorithm (see e.g. the Dykstra algorithm \cite{DeutschHundal1994,Dykstra83,Shusheng2000}). Nevertheless, these 
methods are also quite expensive (see the numerical results in \cite{Morillas2005} and the remark preceding section 6.3 in 
\cite{MingGuo-LiangHong-BinKaiWang2007}).

In this paper we particularize the Moreau's decomposition theorem for simplicial cones. 
This leads to an equivalence between the problem of projecting a point onto a simplicial cone 
and one of finding the unique solution of a nonsmooth system of equations.   We apply  Picard's method to find a unique solution of  the  obtained 
associated system.   Under a mild assumption on the simplicial cone we show that the method  generate a  sequence  that  converges linearly to
the  solution of the  associated  system of equations.   Numerical experiments are presented making the comparison between  Picard's and
semi-smooth Newton's methods for solving the nonsmooth system associated with the problem of projecting a point onto a simplicial cone.

The organization of the paper is as follows. In Section~\ref{sec:int.1}, some notations,  basic results used in the paper and the  statement of
the  problems  that we are interested   are presented, in particular, the problem of projecting onto simplicial cone.  In
Section~\ref{sec:mdtfsc}  we present some results about projection onto simplicial cones. In Section~\ref{sec:ssnm}  we  present two different
Picard's   iterations   for solving  the problem of projecting onto simplicial cone.  In Section~\ref{sec:crresult}  theoretical and numerical
comparisons between  Picard's  methods  and semi-smooth Newton's method for solving  the problem of projecting onto simplicial cone
\cite{FN2014_newton} are provided. Some final remarks are made in Section~\ref{sec:conclusions}. 
\section{Preliminaries} \label{sec:int.1}

Consider $\R^m$ endowed with an orthogonal coordinate system and let 
$\lng\cdot,\cdot\rng$ be the canonical scalar product defined by it. Denote by $\|\cdot\|$ be
the norm generated by $\lng\cdot,\cdot\rng$.  If $a\in\R$ and $x=(x^1,\dots,x^m)\in\R^m$, then denote $a^+:=\max\{a,0\}$, $a^-:=\max\{-a,0\}$ and 
$$ 
x^+:=\left((x^1)^+,\dots,(x^m)^+\right),  x^-:=\left((x^1)^-,\dots,(x^m)^-\right),  |x|:=\left(|x^1|,\dots,|x^m|\right).
$$
For $x\in \R^m$, the vector $\sgn(x)$ will denote a vector with components equal to $1$, $0$ or $-1$ depending on whether the corresponding component of the vector $x$ is positive, zero or negative. We will call a closed set $K\subset\R^m$ a \emph{cone} if the following conditions hold:
\begin{enumerate}
	\item $\lambda x+\mu y\in K$ for any $\lambda,\mu\ge0$ and $x,y\in K$,
	\item $x,-x\in K$ implies $x=0$.
\end{enumerate}
Let \(K \subset \R^m\) be a  closed convex cone. The {\it polar cone}  and the  {\it dual cone} of \( K\) are, respectively,  the sets
\begin{equation} \label{eq:dpd}
 K^\perp\!\!:=\!\{ x\in \R^m\!  \mid\!  \langle x, y \rangle\!\leq\! 0, \forall \, y\!\in\! K\}, \;  K^*\!\!:=\!\{ x\in \R^m\! \mid\!  \langle x, y \rangle\!\geq\! 0, \forall \, y\!\in \!K\}. 
\end{equation}

It is easy to see that $K^\perp=-K^*$.  The set of all $m\times m$ real  matrices is denoted by $\R^{m\times m}$,  $I$ denotes the $m\times m$ identity matrix and $\diag (x)$ will denote a diagonal matrix corresponding to  elements of $x$.

For an   $M \in \R^{m\times m}$  consider the norm defined by 
$$\|M\|:=\max_{x\ne0}\{\|Mx\|~:~ x\in \R^m,  ~ \|x\|=1\},$$  this definition implies
\begin{equation} \label{eq:np}
\|Mx\|\leq \|M\|\|x\|, \qquad \|LM\|\le\|L\|\|M\|, 
\end{equation}
for any $m\times m$ matrices $L$ and $M$. 

Denote $\R^m_+=\{x=(x^1,\dots,x^m)\in\R^m:x_1\ge0,\dots,x^m\ge0\}$ the nonnegative orthant.
Let $A \in \R^{m\times m}$ be a nonsingular matrix. Then, the cone 
\begin{equation} \label{d:sc}
K:=A\R^m_+=\{Ax~:~ x=(x^1,\dots,x^m)\in\R^m, ~x_1\ge0,\dots,x^m\ge0\}, 
\end{equation}
 is called a  \emph{simplicial cone} or \emph{finitely generated cone}.  Let $z\in \R^m$, then  the {\it projection $P_K(z)$ of the point $z$ onto the cone  \( K\)} is defined by 
$$
P_K(z):=\argmin \left\{\|z-y\|~:~ y\in K\right\}.
$$
From the definition of simplicial cone associated with  the  matrix   $A$ this definition is  equivalent to
$$
P_K(z)\!:=\!\argmin\! \left\{\!\frac{1}{2}\|z-Ax\|^2 : x=(x^1,\dots,x^m)\!\in\!\R^m, ~x_1\!\ge0,\!\dots\!,\! \ge x^m\!\ge0\!\right\}\!.
$$
\begin{remark} \label{re:ppo}
It is easy to see that $P_{\R^m_+}(z)=z^{+}$. It is well know that the projection onto a convex set  is continuous and nonexpansive,  in particular, we have  $\|z^+ -w^+\|\leq \|z-x\|$ for all $ ~z, w \in\R^m,$  see \cite{HiriartLemarecal1}. 
\end{remark}
The above remark shows that  projection onto the  nonnegative orthant is an easy problem.  On the other hand, the projection onto a  general simplicial cone is 
difficult and computationally expensive, this problem has been studied e.g. in \cite{AlSultanMurty1992, Frick1997, FN2014_newton, ujvari2007projection,NemethNemeth2009,EkartNemethNemeth2009}.  The statement
of the  problem that we are interested   is:
\begin{problem}[{\bf projection onto a simplicial cone}] \label{prob:pp}
 {\it   Given $A \in \R^{m\times m}$  a nonsingular matrix  and  $z\in \R^m$,  find  the  projection $P_K(z)$ of the point $z$ onto the  simplicial cone \( K=A\R^m_+\).}
\end{problem}
The problem of  projection onto a  simplicial cone has many different formulations which allow us develop  different techniques   for solving them. In the next remark we present some of these formulations. 
\begin{remark}  Let  $A \in \R^{m\times m}$ be   a nonsingular matrix  and  $z\in \R^m$. From the definition of the simplicial cone associated with  the  matrix   $A$ in \eqref{d:sc}, the problem of   projection onto a  simplicial cone   $K=A\R^m_+$ may be  stated equivalently as the following quadratic problem
$$
  \min  ~ \frac{1}{2}\|z-Ax\|^2, \qquad  \const  x \ge 0.
$$
Hence, if $v\in R^m$ is the   unique  solution of this problem then  we have   $P_K(z)=v$.  The above problem is equivalent to the  following nonnegative quadratic problem
\begin{equation}  \label{eq:nap}
 \min  ~ \frac{1}{2}x^\top Qx+x^\top b +c , \qquad  \const  x \ge 0, 
\end{equation}
by taking $Q=A^\top A$,  $b=-A^\top z$ and $c=z^\top z /2$.  
The  optimality condition for the problem \eqref{eq:nap}  implies  that its solution  can be obtained by solving the following linear complementarity problem 
\begin{equation} \label{eq:lcp}
y=Qx+b, \qquad x \ge 0, \qquad    y \ge0, \qquad   x^\top y=0.
\end{equation}
where $y$ is a column vector of variables in $\R^m$.  It is  easy to establish  that corresponding to each  nonnegative quadratic problems  \eqref{eq:nap} and each  linear complementarity problems \eqref{eq:lcp} associated to  symmetric positive definite matrixes, there are equivalent  problems of   projection onto  simplicial cones.   Therefore, the problem of projecting onto  simplicial cones can be solved by  active set
methods \cite{Bazaraa2006,LiuFathi2011,LiuFathi2012,Murty1988} or any algorithms for solving LCPs,  see e.g  \cite{Bazaraa2006,Murty1988} and
special methods based on its geometry,   see e.g \cite{MurtyFathi1982,Murty1988}. Other fashionable ways to solve this problem are based on
the classical von Neumann algorithm (see e.g. the Dykstra algorithm \cite{DeutschHundal1994,Dykstra83,Shusheng2000}). Nevertheless, these 
methods are also quite expensive (see the numerical results in \cite{Morillas2005} and the remark preceding section 6.3 in 
\cite{MingGuo-LiangHong-BinKaiWang2007}).
\end{remark}
As we will see in the next section, by using Moreau's decomposition theorem for projecting onto cones,  solving Problem~\ref{prob:pp} is reduced to solving the following  
problem.
\begin{problem}[{\bf nonsmooth equation}]  \label{prob:ne}
 {\it   Given $A \in \R^{m\times m}$  a nonsingular matrix   and  $z\in \R^m$,  find  the unique  solution $u$ of the nonsmooth equation}
\begin{equation}\label{equation}
		\left(A^\top A-I\right)x^++x=A^\top z.
\end{equation} 
In this case, $P_K(z)=Au^+$ where $K=A\R^m_+$.
\end{problem}
 Since $x^+=(x+|x|)/2$  the Problem~\ref{prob:ne} is equivalent to the following problem:

 \begin{problem}[{\bf absolute value equation}]  \label{prob:nee}
 {\it   Given $A \in \R^{m\times m}$  a  nonsingular matrix   and  $z\in \R^m$,  find  the unique  solution $u$ of the absolute value equation}
\begin{equation}\label{equatione}
		\left(A^\top A+I\right)x+ \left(A^\top A-I\right)|x|=2A^\top z.
\end{equation} 
In this case, $P_K(z)=Au^+$ where $K=A\R^m_+$.
\end{problem}
We will show in Section~\ref{sec:ssnm} that Problem~\ref{prob:ne} and Problem~\ref{prob:nee} can   be solved by using Picard's  method.  We end this section with the Banach's fixed point theorem which will be used for proving our main result, its proof can be found in \cite{Kreyszig1978} (see Theorem~$5.1-2$ pag. 300 and Corollary~$5.1-3$ pag. 302).
\begin{theorem}  [Banach's fixed point theorem] \label{fixedpoint}
Let $({\mathbb X}, d)$ be a non-empty complete metric space, $0\leq \alpha <1$ and $T : {\mathbb X} \to  {\mathbb X}$  a mapping  satisfying $d(T(x),T(y)) \le \alpha d(x,y)$, for all
$ x, y \in {\mathbb X}$. Then there exists an unique $x\in {\mathbb X}$  such that $T(x) = x$. Furthermore, $x$ can be found as follows: start with an arbitrary element $x_0 \in {\mathbb X}$ and define a sequence $\{x_n\}$ by $x_{n+1}= T(x_{n})$, then $\lim_{n  \to +\infty}x_{n}= x$ and the following inequalities hold:
$$
d(x, x_{n+1}) \leq \frac{ \alpha}{1- \alpha} d(x_{n+1},x_n), \quad  d(x, x_{n+1}) \leq \alpha  d(x,x_n),  \quad n=0,1, \ldots.
$$
\end{theorem}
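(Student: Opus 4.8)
The plan is to reproduce the classical proof of Banach's fixed point theorem: build the Picard iterates, show they form a Cauchy sequence by a geometric-series estimate, use completeness to extract a limit, exploit the continuity of $T$ to identify that limit as a fixed point, and then read off uniqueness and the two error bounds from the contraction inequality.

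Concretely, I would start with an arbitrary $x_0\in\mathbb X$ and define $x_{n+1}=T(x_n)$. Applying $d(T(x),T(y))\le\alpha d(x,y)$ repeatedly gives, by induction, $d(x_{n+1},x_n)\le\alpha^n d(x_1,x_0)$ for every $n\ge 0$. Hence for $m>n$ the triangle inequality and the partial sums of a geometric series yield
$$
d(x_m,x_n)\le\sum_{k=n}^{m-1}d(x_{k+1},x_k)\le\left(\sum_{k=n}^{m-1}\alpha^k\right)d(x_1,x_0)\le\frac{\alpha^n}{1-\alpha}\,d(x_1,x_0).
$$
Because $0\le\alpha<1$, the right-hand side goes to $0$ as $n\to+\infty$, so $\{x_n\}$ is Cauchy, and completeness of $(\mathbb X,d)$ furnishes a point $x\in\mathbb X$ with $x_n\to x$.

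Next I would check that $x=T(x)$. Since the contraction inequality makes $T$ Lipschitz, hence continuous, passing to the limit in $x_{n+1}=T(x_n)$ gives $x=T(x)$; equivalently, $d(x,T(x))\le d(x,x_{n+1})+d(T(x_n),T(x))\le d(x,x_{n+1})+\alpha d(x_n,x)\to 0$. For uniqueness, if $T(x)=x$ and $T(y)=y$ then $d(x,y)=d(T(x),T(y))\le\alpha d(x,y)$, whence $(1-\alpha)d(x,y)\le 0$ and therefore $x=y$.

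Finally, the stated inequalities come from specializing the estimate above. Starting the triangle-inequality chain at index $n+1$ instead of $n$ gives $d(x_m,x_{n+1})\le\frac{\alpha}{1-\alpha}d(x_{n+1},x_n)$ for all $m\ge n+1$, and letting $m\to+\infty$ yields $d(x,x_{n+1})\le\frac{\alpha}{1-\alpha}d(x_{n+1},x_n)$; the bound $d(x,x_{n+1})\le\alpha d(x,x_n)$ is then immediate from $d(x,x_{n+1})=d(T(x),T(x_n))\le\alpha d(x,x_n)$. There is no real obstacle in this argument — the only step needing a little care is the geometric-series estimate, which does double duty: it proves the Cauchy property and, after re-indexing, delivers the a posteriori error bound, while everything else follows directly from the contraction hypothesis and completeness.
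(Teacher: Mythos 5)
Your proof is correct and complete: the Picard iterates, the geometric-series Cauchy estimate, completeness, continuity of $T$, the uniqueness argument, and the derivation of both error bounds (the a posteriori bound by re-running the telescoping estimate from index $n+1$ and letting $m\to+\infty$, the linear bound directly from $d(x,x_{n+1})=d(T(x),T(x_n))\le\alpha d(x,x_n)$) are all sound. The paper itself does not prove this statement but cites Kreyszig (Theorem 5.1-2 and Corollary 5.1-3), and your argument is exactly that standard proof, so there is nothing to flag.
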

\section{Moreau's decomposition theorem for simplicial cones}  \label{sec:mdtfsc}
In this section we present some results about projection onto simplicial cones.  We recall the following result due to Moreau \cite{Moreau1962}:
\begin{theorem}[Moreau's decomposition theorem]\label{tmor}
Let $K,L\subseteq\R^m$ be two mutually polar cones in $\R^m$. Then, the following 
statements are equivalent:
\begin{enumerate}
	\item $z=x+y,~x\in K,~y\in L$ and $\langle x,y\rangle=0$,
	\item $x=P_K(z)$ and $y=P_L(z)$.
\end{enumerate}
\end{theorem}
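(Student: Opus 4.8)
The plan is to prove the two implications separately, relying on the standard variational characterization of the metric projection onto a nonempty closed convex set $C$: one has $p=P_C(z)$ if and only if $p\in C$ and $\langle z-p,\,w-p\rangle\le 0$ for every $w\in C$. Since $K$ and $L$ are mutually polar cones, I will use throughout that $L=K^\perp$ and $K=L^\perp$, and that the projections $P_K$ and $P_L$ are single-valued.

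First I would prove (i) $\Rightarrow$ (ii). Assuming $z=x+y$ with $x\in K$, $y\in L$ and $\langle x,y\rangle=0$, I would check $x=P_K(z)$ by verifying the variational inequality: for an arbitrary $w\in K$ one has $z-x=y\in K^\perp$, hence $\langle y,w\rangle\le 0$, and combined with $\langle y,x\rangle=0$ this gives $\langle z-x,\,w-x\rangle=\langle y,w\rangle-\langle y,x\rangle\le 0$. Thus $x=P_K(z)$. The identical computation with the roles of $K$ and $L$ interchanged — legitimate because $K=L^\perp$ and because $\langle x,y\rangle=0$ is symmetric — yields $y=P_L(z)$.

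Next I would prove (ii) $\Rightarrow$ (i). From $x=P_K(z)$ and the variational inequality applied with $w=0\in K$ and then with $w=2x\in K$, I obtain $\langle z-x,\,x\rangle\ge 0$ and $\langle z-x,\,x\rangle\le 0$, hence $\langle z-x,\,x\rangle=0$; consequently, for every $w\in K$, $\langle z-x,\,w\rangle\le\langle z-x,\,x\rangle=0$, so $z-x\in K^\perp=L$. Therefore $z$ admits the decomposition $z=x+(z-x)$ with $x\in K$, $z-x\in L$ and $\langle x,\,z-x\rangle=0$, which is precisely the hypothesis of statement (i); applying the already-established implication (i) $\Rightarrow$ (ii) to this decomposition forces $z-x=P_L(z)$, and since $P_L$ is single-valued $z-x=y$. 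Hence $z=x+y$ and $\langle x,y\rangle=0$, i.e. (i) holds.

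I do not expect a genuine obstacle here: once the variational (obtuse-angle) characterization of the projection is in hand, the argument is a direct verification. The only point that needs a little care is that the symmetry invoked in the first implication is legitimate — this rests solely on the assumption that $K$ and $L$ are mutually polar, that is $K^\perp=L$ and $L^\perp=K$ — and, for a fully self-contained treatment, one would first note that $K^\perp$ and $L^\perp$ are themselves closed convex cones, so that the projections onto them are well defined.
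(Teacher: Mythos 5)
Your proof is correct. Note, however, that the paper itself gives no proof of this statement: Theorem~\ref{tmor} is simply recalled from Moreau's 1962 paper and used as a known tool, so there is no internal argument to compare yours against. What you have written is the standard self-contained derivation via the obtuse-angle (variational) characterization of the projection onto a closed convex set: the computation $\langle z-x,\,w-x\rangle=\langle y,w\rangle-\langle y,x\rangle\le 0$ for (i)$\Rightarrow$(ii), and the choices $w=0$ and $w=2x$ (legitimate since $K$ is a cone) to extract $\langle z-x,x\rangle=0$ and $z-x\in K^\perp=L$ for (ii)$\Rightarrow$(i), after which single-valuedness of $P_L$ identifies $y$ with $z-x$. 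All steps check out, and your use of symmetry in the first implication is justified exactly as you say, by the hypothesis that $K$ and $L$ are mutually polar, i.e.\ $L=K^\perp$ and $K=L^\perp$; closedness and convexity of the cones (as in the paper's definition of a cone together with \eqref{eq:dpd}) guarantee the projections are well defined and single-valued.
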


\begin{remark} \label{r:pdu}
Let $K$ be  a cone in $\R^m$. Note that from Moreau's  decomposition theorem,  definition of the polar cone and the dual cone in \eqref{eq:dpd}  and the relationship $K^\perp=-K^*$ it follows that 
$$
P_K(z)=z+P_{K^*}(-z),  \qquad \forall ~z\in\R^m.
$$
Hence the  problem of projecting onto $K$ is equivalent to problem of  projecting onto  $K^{*}$.
\end{remark}

\noindent
The following result follows from the definition  of the polar, see  \cite{AbbasNemeth2012}.
\begin{lemma}\label{lad}  
Let $A \in \R^{m\times m}$  be a nonsingular matrix. Then,
\[
(A\mathbb{R}^m_+)^\perp=-(A^\top)^{-1}\mathbb{R}^m_+.
\]
\end{lemma}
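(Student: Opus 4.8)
The plan is to unwind the definition of the polar cone from \eqref{eq:dpd} and transport the matrix $A$ across the scalar product via its transpose. Concretely, a vector $x\in\R^m$ lies in $(A\R^m_+)^\perp$ if and only if $\langle x, Aw\rangle\le 0$ for every $w\in\R^m_+$; and since $\langle x, Aw\rangle=\langle A^\top x, w\rangle$, this is in turn equivalent to the statement that $A^\top x$ belongs to the polar of $\R^m_+$, i.e. $A^\top x\in(\R^m_+)^\perp$.

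Next I would record the elementary fact that the nonnegative orthant is self-dual, so that $(\R^m_+)^\perp=-\R^m_+$. One direction follows by testing against the standard basis vectors $e_1,\dots,e_m\in\R^m_+$: if $\langle y, e_i\rangle\le 0$ for each $i$ then every coordinate of $y$ is nonpositive; the converse is immediate, since any vector with nonpositive coordinates has nonpositive scalar product with every element of $\R^m_+$. Combining this with the previous paragraph, $x\in(A\R^m_+)^\perp$ if and only if $A^\top x\in-\R^m_+$.

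Finally, because $A$ is nonsingular, so is $A^\top$, and I can apply $(A^\top)^{-1}$ to both sides of the membership condition: $x\in(A\R^m_+)^\perp$ is equivalent to $x\in(A^\top)^{-1}(-\R^m_+)=-(A^\top)^{-1}\R^m_+$, which is precisely the claimed identity. Since this has been derived as a chain of equivalences, the set equality is obtained at once, with no need for a separate double-inclusion argument. There is no real obstacle here; the only points deserving attention are getting the polarity in the right direction (polar versus dual cone, and the sign coming from $K^\perp=-K^*$) and explicitly invoking the nonsingularity of $A$, which guarantees that $-(A^\top)^{-1}\R^m_+$ is a genuine (full-dimensional) simplicial cone and that the inversion step above is legitimate. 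Everything else is bookkeeping with the adjoint identity $\langle x, Aw\rangle=\langle A^\top x, w\rangle$.
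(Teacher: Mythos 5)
Your proof is correct and follows exactly the route the paper intends: the paper states that the lemma ``follows from the definition of the polar'' (citing \cite{AbbasNemeth2012}), and your chain of equivalences via the adjoint identity $\langle x, Aw\rangle=\langle A^\top x, w\rangle$, the self-polarity relation $(\R^m_+)^\perp=-\R^m_+$, and the invertibility of $A^\top$ is precisely the standard filling-in of that argument. Nothing is missing.
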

The following result has been proved in \cite{AbbasNemeth2012} by using Moreau's decomposition theorem and Lemma~\ref{lad}.

\begin{lemma}\label{pm}
	Let $A \in \R^{m\times m}$  be a nonsingular matrix and $K=A\R^m_+$ the corresponding simplicial cone.
	Then, for any $z\in\R^m$ there exists a unique $x\in\R^m$ such that the following two 
	equivalent statements hold:
	\begin{enumerate}
		\item $z=Ax^+-(A^\top)^{-1}x^-,~x\in\R^m$,
		\item $Ax^+=P_K(z)$ and $-(A^\top)^{-1}x^-=P_{K^\perp}(z)$.
	\end{enumerate}
\end{lemma}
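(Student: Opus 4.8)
The plan is to apply Moreau's decomposition theorem (Theorem~\ref{tmor}) to the pair of cones $K=A\R^m_+$ and $L:=K^\perp=-(A^\top)^{-1}\R^m_+$, the second equality being Lemma~\ref{lad}. Since $K$ is a closed convex cone we have $(K^\perp)^\perp=K$, so $K$ and $L$ are mutually polar and Theorem~\ref{tmor} applies. I would first isolate the elementary algebraic fact that drives the argument: for every $x\in\R^m$ the vectors $x^+$ and $x^-$ have disjoint supports, hence $\lng x^+,x^-\rng=0$; conversely, given $u,v\in\R^m_+$ with $\lng u,v\rng=0$, the vector $x:=u-v$ satisfies $x^+=u$ and $x^-=v$. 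Together with the nonsingularity of $A$ (and of $A^\top$), this yields, for any $p\in K$ and $q\in L$ written uniquely as $p=Au$, $q=-(A^\top)^{-1}v$ with $u,v\in\R^m_+$, the identity $\lng p,q\rng=-\lng Au,(A^\top)^{-1}v\rng=-u^\top v=-\lng u,v\rng$; thus $\lng p,q\rng=0$ if and only if $\lng u,v\rng=0$.

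For existence, apply Theorem~\ref{tmor} to $z$: there exist $p\in K$, $q\in L$ with $z=p+q$, $\lng p,q\rng=0$, $p=P_K(z)$, $q=P_L(z)$. Writing $p=Au$, $q=-(A^\top)^{-1}v$ with $u,v\in\R^m_+$, the orthogonality gives $\lng u,v\rng=0$, so $x:=u-v$ has $x^+=u$, $x^-=v$, and then $z=Ax^+-(A^\top)^{-1}x^-$ with $Ax^+=P_K(z)$ and $-(A^\top)^{-1}x^-=P_{K^\perp}(z)$; this produces an $x$ satisfying (i) and (ii) simultaneously. For the equivalence (i)$\Leftrightarrow$(ii) (with $z$ fixed and $x$ arbitrary): if (i) holds, set $p:=Ax^+\in K$ and $q:=-(A^\top)^{-1}x^-\in L$; the support fact gives $\lng p,q\rng=0$, and $z=p+q$, so the first assertion of Theorem~\ref{tmor} holds and therefore the second gives (ii). Conversely, if (ii) holds, then the second assertion of Theorem~\ref{tmor} holds for $p=Ax^+$, $q=-(A^\top)^{-1}x^-$, so the first assertion gives $z=p+q$, which is (i).

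Finally, for uniqueness, suppose $x$ and $\tilde x$ both satisfy the (equivalent) conditions. By (ii), $Ax^+=P_K(z)=A\tilde x^+$ and $-(A^\top)^{-1}x^-=P_{K^\perp}(z)=-(A^\top)^{-1}\tilde x^-$; since $A$ and $A^\top$ are nonsingular, this forces $x^+=\tilde x^+$ and $x^-=\tilde x^-$, whence $x=x^+-x^-=\tilde x$. I do not anticipate a genuine obstacle: the only points needing care are checking that $K$ and $L$ are truly mutually polar (using closedness of $K$, so $K^{\perp\perp}=K$) and making the correspondence between sign decomposition $x=x^+-x^-$ and complementarity $\lng x^+,x^-\rng=0$ fully explicit; the remainder is bookkeeping with Moreau's theorem and the invertibility of $A$.
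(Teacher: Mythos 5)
Your proof is correct, and it follows exactly the route the paper indicates: the paper does not write out a proof of Lemma~\ref{pm} but attributes it to \cite{AbbasNemeth2012} as a consequence of Moreau's decomposition theorem (Theorem~\ref{tmor}) combined with Lemma~\ref{lad}, which is precisely your argument. Your filling in of the details (the correspondence between $x=x^+-x^-$ and the complementarity $\lng x^+,x^-\rng=0$, the identity $\lng Au,(A^\top)^{-1}v\rng=\lng u,v\rng$, and uniqueness via nonsingularity of $A$ and $A^\top$) is sound.
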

 The  following result is a direct consequence of Lemma \ref{pm}, it shows that solving Problem~\ref{prob:pp} is reduced to solving Problem~\ref{prob:ne}.
\begin{lemma}\label{pm2}
	Let $A \in \R^{m\times m}$  be a nonsingular matrix, $K=A\R^m_+$ the corresponding simplicial cone
	and  $z\in\R^m$ arbitrary. Then, equations \eqref{equation} and \eqref{equatione} have a unique solution $u$ and $P_K(z)=Au^+$, i.e., to
	solve   Problem~\ref{prob:pp} is equivalent to solving either Problem~\ref{prob:ne} or Problem~\ref{prob:nee}.
\end{lemma}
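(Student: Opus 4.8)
The plan is to derive equation~\eqref{equation} directly from Lemma~\ref{pm}, and then to observe that the equivalence of~\eqref{equation} and~\eqref{equatione} together with the final identity $P_K(z)=Au^+$ are essentially bookkeeping. I would start from statement (i) of Lemma~\ref{pm}: for the given $z$ there is a unique $u\in\R^m$ with $z=Au^+-(A^\top)^{-1}u^-$. Multiplying both sides on the left by $A^\top$ gives $A^\top z=A^\top Au^+-u^-$. The key manipulation is then to rewrite $u^-$ in terms of $u^+$ and $u$: since $u=u^+-u^-$ componentwise, we have $u^-=u^+-u$, so $A^\top z = A^\top Au^+ - u^+ + u = (A^\top A-I)u^+ + u$, which is precisely~\eqref{equation} evaluated at $x=u$. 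Conversely, any solution $x$ of~\eqref{equation} yields $A^\top z = (A^\top A-I)x^+ + x = A^\top A x^+ - x^+ + x = A^\top A x^+ - x^-$ (again using $x^+-x=x^-$), hence $A^\top z = A^\top\!\left(Ax^+-(A^\top)^{-1}x^-\right)$, and since $A^\top$ is nonsingular we recover $z=Ax^+-(A^\top)^{-1}x^-$, i.e.\ statement (i) of Lemma~\ref{pm}. Thus the solution set of~\eqref{equation} coincides with the (one-element) set of vectors $u$ furnished by Lemma~\ref{pm}; in particular~\eqref{equation} has a unique solution $u$, and by statement (ii) of Lemma~\ref{pm} we get $P_K(z)=Au^+$.

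Next I would dispose of~\eqref{equatione}. Using the identity $x^+=(x+|x|)/2$ already noted in the text, substitute into~\eqref{equation}: $(A^\top A-I)\tfrac{x+|x|}{2}+x=A^\top z$. Multiplying through by $2$ and collecting the coefficient of $x$, namely $(A^\top A-I)+2I=A^\top A+I$, and the coefficient of $|x|$, namely $A^\top A-I$, gives exactly $(A^\top A+I)x+(A^\top A-I)|x|=2A^\top z$, which is~\eqref{equatione}. This substitution is reversible, so~\eqref{equation} and~\eqref{equatione} have the same solution set; combined with the previous paragraph, both have the unique solution $u$ and $P_K(z)=Au^+$. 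This establishes all three assertions of Lemma~\ref{pm2}.

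There is no real obstacle here: the lemma is a direct consequence of Lemma~\ref{pm}, as the statement itself advertises, and the only content is the elementary algebra of the decomposition $x=x^+-x^-$ together with $x^+=(x+|x|)/2$ and the nonsingularity of $A$ (needed only to pass from $A^\top z = A^\top(\cdots)$ back to $z=\cdots$). The one point worth stating carefully is that the correspondence $x\mapsto x$ between solutions of~\eqref{equation} and vectors in Lemma~\ref{pm}(i) is a genuine bijection of solution sets, so that uniqueness transfers; once that is observed, the identity $P_K(z)=Au^+$ is immediate from Lemma~\ref{pm}(ii).
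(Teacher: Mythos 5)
Your proposal is correct and follows essentially the same route as the paper: multiply Lemma~\ref{pm}(i) by $A^\top$, use $-x^-=x-x^+$ to obtain \eqref{equation}, invoke the uniqueness and the formula $P_K(z)=Au^+$ from Lemma~\ref{pm}(ii), and pass to \eqref{equatione} via $x^+=(x+|x|)/2$. You merely spell out the reverse implication (using nonsingularity of $A^\top$) and the algebra of the equivalence with \eqref{equatione} in more detail than the paper does.
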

\begin{proof}
 Since $A$ is  an $m\times m$ nonsingular matrix, multiplying by $A^\top$, the equality in item (i) of Lemma \ref{pm} is  equivalently  transformed into 
$$
A^\top Ax^+ - x^-=A^\top z.
$$	
As $-x^-=x-x^+$, the above equality  is equivalent to \eqref{equation}.  Therefore, equation \eqref{equation}  is equivalent to the equation in item (i) of Lemma \ref{pm}. 
Hence, we conclude from   Lemma \ref{pm} that equation \eqref{equation} has a unique solution $u$ and  $P_K(z)=Au^+$.  Since the equations \eqref{equation} and \eqref{equatione} are equivalent the result follows. 
\end{proof}
\section{Picard's Method} \label{sec:ssnm}
In this section we will present two different  Picard's   iterations,   one of them for solving  Problem~\ref{prob:ne} and   the other one for solving  Problem~\ref{prob:nee}.  
\subsection{Picard's Method  for solving Problem~\ref{prob:ne}}
The {\it Picard's method} for solving Problem~\ref{prob:ne}    is formally  defined by 
\begin{equation}\label{eq:pm}
		x_{k+1}=- \left(A^\top A-I\right)x_{k}^+ +A^\top z , \qquad k=0,1,2,\ldots.
\end{equation}
The sequence  $\{x_k\}$ with starting point $x_0 \in \R^m$,  called  the {\it Picard's sequence} for solving Problem~\ref{prob:ne}.  The next theorem provides a sufficient condition for the linear convergence of the Picard's  iteration \eqref{eq:pm}. 	
\begin{theorem} \label{th:cp1}
 Let $A \in \R^{m\times m}$  be a nonsingular matrix, $K=A\R^m_+$ the corresponding simplicial cone
	and  $z\in\R^m$ arbitrary. If 
\begin{equation} \label{eq:asnm}
 \|A^\top A-I\|<1,
\end{equation}
then the Picard's sequence $\{x_k\}$ for solving  Problem~\ref{prob:ne} converges to the unique solution $u$ of equation \eqref{equation} from any starting point $x_0\in \R^m$, $P_K(z)=Au^+$ and the following error bound   holds
\begin{equation} \label{eq:ebp1}
	\|u- x_{k}\| \leq  \frac{\|A^\top A-I\|}{1-\|A^\top A-I\|}  \|  x_{k}- x_{k-1}\|, \qquad \forall ~ k=1,2\ldots .
\end{equation}
Moreover, the sequence $\{x_k\}$ converges linearly to $u$ as follows	
\begin{equation} \label{eq:ebp2}	
	\|u- x_{k+1}\|\leq  \|A^\top A-I\|\|u - x_{k}\|,  \qquad k=0,1,2, \ldots .
\end{equation}	
\end{theorem}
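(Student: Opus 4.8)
The plan is to recognize the Picard iteration \eqref{eq:pm} as a fixed-point iteration $x_{k+1}=T(x_k)$ for the map $T:\R^m\to\R^m$ defined by
\[
T(x):=-\left(A^\top A-I\right)x^++A^\top z,
\]
and to verify that $T$ is a contraction on the non-empty complete metric space $(\R^m,d)$ with $d(x,y)=\|x-y\|$, under hypothesis \eqref{eq:asnm}. First I would observe that $x$ is a fixed point of $T$ if and only if $x$ solves equation \eqref{equation}; combined with Lemma~\ref{pm2} this already tells us a solution exists, is unique, and satisfies $P_K(z)=Au^+$, but in fact the uniqueness and the convergence will also follow directly from the contraction argument below.

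The core estimate is the following: for any $x,y\in\R^m$,
\[
\|T(x)-T(y)\|=\left\|\left(A^\top A-I\right)(x^+-y^+)\right\|\le\|A^\top A-I\|\,\|x^+-y^+\|\le\|A^\top A-I\|\,\|x-y\|,
\]
where the first inequality is the compatibility bound \eqref{eq:np} for the operator norm, and the second is the nonexpansiveness of the projection $z\mapsto z^+$ onto $\R^m_+$ recalled in Remark~\ref{re:ppo}. Setting $\alpha:=\|A^\top A-I\|$, hypothesis \eqref{eq:asnm} gives $0\le\alpha<1$, so $T$ satisfies the hypotheses of Banach's fixed point theorem (Theorem~\ref{fixedpoint}).

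Applying Theorem~\ref{fixedpoint} then delivers everything at once: $T$ has a unique fixed point $u$, i.e.\ the unique solution of \eqref{equation} (consistent with Lemma~\ref{pm2}); the Picard sequence $\{x_k\}$ converges to $u$ from any starting point $x_0\in\R^m$; and the two displayed inequalities of Banach's theorem, written with $d(x,y)=\|x-y\|$ and the above $\alpha$, become precisely the error bounds \eqref{eq:ebp1} and \eqref{eq:ebp2}. The identity $P_K(z)=Au^+$ is inherited from Lemma~\ref{pm2}, and the statement for \eqref{equatione} follows since equations \eqref{equation} and \eqref{equatione} are equivalent.

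I do not expect a serious obstacle: the only step requiring genuine care is the contraction estimate, and within it the inequality $\|x^+-y^+\|\le\|x-y\|$, which is not a routine coordinatewise manipulation but is exactly the nonexpansiveness of the metric projection onto the convex cone $\R^m_+$ — already recorded in Remark~\ref{re:ppo}. Everything else is bookkeeping: identifying the iteration \eqref{eq:pm} with $x_{k+1}=T(x_k)$, matching the abstract inequalities of Banach's theorem to \eqref{eq:ebp1}–\eqref{eq:ebp2}, and invoking Lemma~\ref{pm2} for the conclusion on $P_K(z)$.
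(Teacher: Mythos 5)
Your proposal is correct and follows essentially the same route as the paper: define the map $T(x)=-\left(A^\top A-I\right)x^++A^\top z$, establish the contraction estimate via the operator-norm bound and the nonexpansiveness of $x\mapsto x^+$ from Remark~\ref{re:ppo}, and then invoke Banach's fixed point theorem (Theorem~\ref{fixedpoint}) together with Lemma~\ref{pm2} to obtain convergence, the error bounds \eqref{eq:ebp1}--\eqref{eq:ebp2}, and $P_K(z)=Au^+$. No gaps.
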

\begin{proof}
Define the function $ F :  \R^m \to \R^m $ as 
\begin{equation}\label{eq:pm1}
		F(x)=- \left(A^\top A-I\right)x^+ +A^\top z.
\end{equation}
Since Remark~\ref{re:ppo} implies  $\| x^+ - y^+\| \leq \|x - y\|$ for all $ x, y \in \R^m$, from  \eqref{eq:pm1}   it easy to conclude that 
$$
		\|F(x)-F(y)\| \leq   \|A^\top A-I\| \|x-y\|, \quad \forall \; x, y \in \R^m.
$$
Therefore, as by  assumption $\|A^\top A-I\|<1 $  we may apply Theorem~\ref{fixedpoint} with ${\mathbb X}=  \R^m$,  $T=F$,  $d(x,y)=\|y-x\|$ for all $ x, y \in \R^m$ and $\alpha=  \|A^\top A-I\| $,   for concluding that  the Picard's Method \eqref{eq:pm}  or equivalently,  the sequence
$$
x_{k+1}=F(x_k),  \qquad k=0,1, \ldots,  
$$
converges to a unique fixed point $u$ of $F$,  which from \eqref{eq:pm1} is the solution of the Problem~\ref{prob:ne}, i.e., 
$$
\left(A^\top A-I\right)u^+ +  u= A^\top z,
$$
and by using Lemma~\ref{pm2} we have $P_K(z)=Au^+$.  Moreover,  Theorem~\ref{fixedpoint} implies that  the inequalities \eqref{eq:ebp1} and \eqref{eq:ebp2} hold.
\end{proof}	
\subsection{Picard's Method  for solving Problem~\ref{prob:nee}}
The {\it Picard's method} for solving Problem~\ref{prob:nee}   is formally  defined by 
\begin{equation}\label{eq:pme}
		\left(A^\top A+I\right)x_{k+1}=-\left(A^\top A-I\right)|x_{k}| +2A^\top z , \qquad k=0,1,2,\ldots.
\end{equation}
The sequence  $\{x_k\}$ with starting point $x_0 \in \R^m$,  called  the {\it Picard's sequence} for solving equation \eqref{equatione} or  for  projecting a point  $z\in \R^m$ onto the simplicial cone $K$.  From now on we will refer this method as {\it Picard 2}. 

Since  $A \in \R^{m\times m}$  is  a  nonsingular matrix we conclude that  $A^\top A$ is symmetric and positive definite. Hence, $A^\top A+I$ is nonsingular. Then for  simplifying  the notations define
\begin{equation} \label{eq:mc}
C:=\left(A^\top A+I\right)^{-1}\left(A^\top A-I\right).
\end{equation}
Let $\lambda_1,   \ldots \lambda_m$ and $\sigma_1,   \ldots \sigma_m$ be the eigenvalues of $A^\top A$ and $C$, respectively.  As $\lambda_i>0$, for $i=1, 2, \ldots m$, it easy to conclude that  
$$
\|C\|=\mbox{max}\left\{ |\sigma_1|,   \ldots |\sigma_m|\right\}<1,  \quad \mbox{where} \qquad \sigma_i=\frac{1-\lambda_i}{\lambda_i+1},  \qquad i=1, 2, \ldots m.
$$
The  next theorem provides the convergence of the Picard's  iteration \eqref{eq:pme}. 	
\begin{theorem} \label{th:cp2}
Let $A \in \R^{m\times m}$  be a nonsingular matrix, $K=A\R^m_+$ the corresponding simplicial cone
	and  $z\in\R^m$ arbitrary. The Picard's sequence $\{x_k\}$ for solving Problem~\ref{prob:nee} is well defined and  converges  to the unique solution $u$ of equation \eqref{equatione} from any starting point $x_0\in \R^m$, $P_K(z)=Au^+$ and the following error bound   holds
\begin{equation} \label{eq:cb3}
	\|u- x_{k}\| \leq  \frac{\|C\|}{1-\|C\|}  \|  x_{k}- x_{k-1}\|, \qquad \forall ~ k=1,2\ldots .
\end{equation}
Moreover, the sequence $\{x_k\}$ converges linearly to $u$ as follows	
\begin{equation} \label{eq:cb4}	
	\|u- x_{k+1}\|\leq  \|C\| \|u - x_{k}\|,  \qquad k=0,1,2, \ldots .
\end{equation}
\end{theorem}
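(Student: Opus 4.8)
The plan is to recast the implicit iteration \eqref{eq:pme} as an explicit fixed point iteration and then invoke Banach's fixed point theorem, exactly in the spirit of the proof of Theorem~\ref{th:cp1}. Since $A$ is nonsingular, $A^\top A$ is symmetric positive definite, hence $A^\top A+I$ is invertible; this already shows that the sequence $\{x_k\}$ is well defined. Multiplying \eqref{eq:pme} by $(A^\top A+I)^{-1}$ and recalling the definition \eqref{eq:mc} of $C$, the iteration becomes
$$
x_{k+1}=G(x_k), \qquad G(x):=-C\,|x|+2\left(A^\top A+I\right)^{-1}A^\top z .
$$

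First I would show that $G$ is a contraction on $\R^m$. The elementary componentwise inequality $\bigl||a|-|b|\bigr|\le|a-b|$ for $a,b\in\R$ gives $\bigl\| \,|x|-|y|\, \bigr\|\le\|x-y\|$ for all $x,y\in\R^m$, so by \eqref{eq:np}
$$
\|G(x)-G(y)\|=\bigl\|C\,(|x|-|y|)\bigr\|\le\|C\|\,\|x-y\|, \qquad \forall\, x,y\in\R^m .
$$
As recorded immediately before the statement of the theorem, $C$ is symmetric (being a product of commuting polynomials in $A^\top A$) with eigenvalues $\sigma_i=(1-\lambda_i)/(1+\lambda_i)$, and since every $\lambda_i>0$ we have $\|C\|=\max_i|\sigma_i|<1$. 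Thus $G$ is a contraction with modulus $\|C\|<1$.

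Then I would apply Theorem~\ref{fixedpoint} with $\mathbb{X}=\R^m$, $T=G$, $d(x,y)=\|x-y\|$ and $\alpha=\|C\|$. This yields a unique fixed point $u$ of $G$, convergence of $\{x_k\}$ to $u$ from any starting point $x_0\in\R^m$, and the two error bounds \eqref{eq:cb3} and \eqref{eq:cb4}. Finally, multiplying the fixed point identity $u=G(u)$ by $A^\top A+I$ shows that $u$ solves the absolute value equation \eqref{equatione}, and Lemma~\ref{pm2} then gives $P_K(z)=Au^+$, which completes the argument.

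I would not expect any serious obstacle: the proof mirrors that of Theorem~\ref{th:cp1}, and the only two points that need care are that the iteration \eqref{eq:pme} is implicit (handled by the invertibility of $A^\top A+I$) and that $\|C\|$ coincides with the spectral radius of $C$ — which is legitimate precisely because $C$ is symmetric, a fact already established before the theorem.
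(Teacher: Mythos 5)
Your proposal is correct and follows essentially the same route as the paper: rewrite \eqref{eq:pme} as the explicit fixed point iteration $x_{k+1}=F(x_k)$ with $F(x)=-C|x|+2(A^\top A+I)^{-1}A^\top z$, use nonexpansiveness of $x\mapsto|x|$ to get the contraction constant $\|C\|<1$, apply Theorem~\ref{fixedpoint}, and finish with Lemma~\ref{pm2}. The only (harmless) addition is your explicit justification that $\|C\|$ equals the spectral radius of $C$, a fact the paper simply records before the theorem.
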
	
\begin{proof}
Since the matrix  $A^\top A+I$ is nonsingular, the function $ F :  \R^m \to \R^m $, 
\begin{equation}\label{eq:pm2}
		F(x):=-\left(A^\top A+I\right)^{-1}\left(A^\top A-I\right)|x| +2\left(A^\top A+I\right)^{-1}A^\top z, 
\end{equation}
 is well defined. 
Since    $\||x|- |y|\|\leq  \|x - y\|$ for all $x, y \in \R^m$, from  \eqref{eq:pm2} and  \eqref{eq:mc}  we conclude that 
$$
		\|F(x)-F(y)\| \leq   \|C\| \|x-y\|, \quad \forall \; x, y \in \R^m.
$$
Therefore, as $\|C\|<1 $  we may apply Theorem~\ref{fixedpoint} with ${\mathbb X}=  \R^m$,  $T=F$,  $d(x,y)=\|y-x\|$ for all $ x, y \in \R^m$ and $\alpha=  \|C\| $,   for concluding that  the Picard's Method \eqref{eq:pme}  or equivalently,  the sequence
$$
x_{k+1}=F(x_k),  \qquad k=0,1, \ldots, 
$$
converges to a unique fixed point $u$ of $F$,  which from \eqref{eq:pm2} is the solution  of the Problem~\ref{prob:nee}, i.e., 
$$
\left(A^\top A+I\right)u+ \left(A^\top A-I\right)|u|=2A^\top z,
$$
and by using Lemma~\ref{pm2} we have $P_K(z)=Au^+$. Moreover,   Theorem~\ref{fixedpoint}  implies that the inequalities   \eqref{eq:cb3} and \eqref{eq:cb4}  hold.
\end{proof}

\section{Comparison between   Picard's and Newton's methods} \label{sec:crresult}

In this section theoretical and numerical comparisons of above Picard's  methods and semi-smooth Newton's method studied in ~\cite{FN2014_newton} are provided.  Also  Picard's  method~\eqref{eq:pme} is applied to solve an specific example.

\subsection{Theoretic   comparison}
In this section theoretical  comparisons   between  Picard's  methods  and semi-smooth Newton's method  for solving Problem~\ref{prob:pp}  will be  provided. 

It is shown in \cite{FN2014_newton} that the semi-smooth Newton method  applied to  equation  \eqref{equation},  namely,
\begin{equation}\label{eq:nm}
	\left(\left(A^\top A-I\right)\mbox{diag}(\mbox{sgn}(x_k^+))+I\right)x_{k+1}=A^\top z,   \qquad k=0, 1, 2, \ldots , 
\end{equation}
is always  well defined and   under the assumption 
\begin{equation} \label{eq:asnm1}
 \|A^\top A-I\|<b<\frac{1}{3},
\end{equation}
on  the matrix $A$ defining the simplicial cone  $K=A\R^m_+$, 
the generated  sequence $\{x_k\}$  converges linearly to the unique solution $u$ of Problem~\ref{prob:ne} from any starting point  and, as a consequence of    Lemma~\ref{pm2}  we have $P_K(z)=Au^+$ for any $z\in \R^m$,  which implies that $u$ solves  Problem~\ref{prob:pp}.

Problem~\ref{prob:pp}, i.e.,  the problem of  projecting a point $z\in \R^m$ onto a simplicial cone $K=A\R^m_+$  is equivalent, by
Lemma~\ref{pm2},  to solving either Problem~\ref{prob:ne} or Problem \ref{prob:nee}.   Note that  solving  Problems~\ref{prob:ne}  by  Picard's
method ~\eqref{eq:pm} assumption \eqref{eq:asnm}  on the  matrix $A$ (see Theorem~\ref{th:cp1})  is  less restrictive than assumption
\eqref{eq:asnm1}.   When solving Problem~\ref{prob:nee}  we only need the invertibility  of the matrix $A$ for  Picard's method ~\eqref{eq:pme}
to converge (see Theorem~\ref{th:cp2}). Therefore, Picard's method ~\eqref{eq:pme} is theoretically  more robust than  Picard's method ~\eqref{eq:pm}  and consequently than semi-smooth Newton method \eqref{eq:nm}. In the next section we will present an example, where according to the established theory, only Picard's method ~\eqref{eq:pme} can be applied.

The main  drawbacks of   Picard~\eqref{eq:pme}   and   semi-smooth Newton~\eqref{eq:nm}  is that both  require the solution of a linear system in each iteration which  constitute the largest computational effort of these methods.   Picard's method~\eqref{eq:pm}  do not have to solve a linear system, avoiding  more  complicated calculations,  which is particularly interesting for large scale problems.  We will investigate the efficiency of these methods in  section~\ref{sec:computationalresults1}.

\subsubsection{Example}
\label{sec:example}
Consider the monotone nonnegative cone, which is a simplicial cone $K$ defined by
\begin{equation} \label{d:mncone}
	K:=\left\{ x=(x^1,\dots,x^m)\in\R^m, ~x^1\ge x^2 \ge  \dots \ge x^m\ge0\right\}.
\end{equation} 
The monotone nonnegative cone and the projection onto it occurs in various important practical problems such as the problem of
map-making from relative distance information e.g., stellar cartography (see web page\footnote{ 
{\small \url{www.convexoptimization.com/wikimization/index.php/Projection_on_Polyhedral_Convex_Cone}}} and Section 5.13.2 in \cite{Dattorro2005})
and isotonic regression \cite{GrotzingerWitzgall1984,BurdakovIvanVassilevski2012,NemethNemeth2012b,Guyader-Jegou-Nemeth-2014}. The isotonic regression 
\cite{BarlowBartholomewBremnerBrunk1972,RobertsonWrightDykstra1988,BestChakravarti1990,PuriSingh1990} is a very important topic in statistics with hundreds of papers and
several books dedicated to this topic. This section provides a different view about projecting onto the monotone nonnegative cones via Picard's
method~\eqref{eq:pme} which is related to the iterative theory of bidiagonal and tridiagonal matrices, and the Fibonacci numbers. 
The  dual of the monotone nonnegative cone  is $K^{*}=A\R^m_+$, where  $A\in\R^{m\times m}$ is the  nonsingular matrix   
\[
A=\left(
\begin{array}{rrrrr}
	     1 &    &        & 	      &	\\
	    -1 &  1 &        & 	      &	\\
	       & -1 & 1      & 	      &	\\
	       &    & \ddots & \ddots & \\
	       &    &        &  -1    & 1 
\end{array}
\right), 
\qquad 
A^\top A=\left(
\begin{array}{rrrrr}
	     2 &  -1  &        & 	      &	\\
	    -1 &  2 &     -1   & 	      &	\\
	       & -1 &  \ddots      & 	 \ddots      &	\\
	       &    & \ddots & 2 & -1  \\
	       &    &        &  -1    & 1 
\end{array}
\right).
\] 
From Remark~\ref{r:pdu}, the problem of projecting a point onto $K^{*}$ is equivalent to projecting onto $K$.
Let $\lambda_1,   \ldots \lambda_m$ be the eigenvalues of $A^\top A$.  From   \cite{Allan2008} we have that    the eigenvalues  of  matrix
$A^\top A$ are given by
\begin{equation} \label{eq:eng}
\lambda_i=2+2\cos\left( \frac{2 i \pi}{2m+1} \right), \qquad  i=1, 2, \ldots,  m.
\end{equation}
Hence from  \eqref{eq:eng} we conclude that 
$$
0<\lambda_{i}<4, \qquad       \lim_{m\to \infty}  \lambda_{m}=0, \qquad  \lim_{m\to \infty}  \lambda_{1}=4,  \qquad  \lim_{m\to \infty} \|A^\top A-I\|=3.
$$
Since  $\|A^\top A-I\|>1$ for all $m\geq 2$, for projecting a point onto the cone  $K^{*}$, we can not apply  semi-smooth Newton method studied in
\cite{FN2014_newton} neither  Picard's  iteration~\eqref{eq:pm}. However Picard's method~\eqref{eq:pme} can be used. In order to reduce the computational cost of this method, for solving the linear system involved in each iteration, we suggest the following triangular decomposition
\[
A^\top A+I= \left(
\begin{array}{rrrrr}
	     d_1 &  -1  &        & 	      &	\\
	     &  d_2 &     -1   & 	      &	\\
	       &  &  \ddots      & 	 \ddots      &	\\
	       &    &  & d_{m-1}& -1  \\
	       &    &        &     & d_m 
\end{array}
\right)
\left(
\begin{array}{rrrrr}
	     1 &    &        & 	      &	\\
	    -\frac{1}{d_2} &  1 &        & 	      &	\\
	       & -\frac{1}{d_3}  &  \ddots      & 	      &	\\
	       &    & \ddots & 1 &   \\
	       &    &        &  -\frac{1}{d_m}     & 1 
\end{array}
\right), 
\]
where $d_m=2$, $d_{i}=3-1/d_{i+1}$  for $i=m-1, \cdots, 1$.  Another alternative for solving the linear system would be to compute the matrices 
 $$
 R=\left(A^\top A+I\right)^{-1}\left(A^\top A-I\right), \qquad   S=\left(A^\top A+I\right)^{-1}A^\top.
 $$
By using the recursion formulas for a tridiagonal matrix from   \cite{daFonseca2007}, which are based on the results of 
\cite{FischerUsmani1969,Usmani1994,Usmani1994b}, after some algebraic manipulations and taking into account that $R$ is symmetric we obtain 
\[
R_{ij}=\left\{
\begin{array}{lll}
	\ds-\f{2F_{2i}F_{2m-2j+1}}{F_{2m+1}}                   & \textrm{if} & 1<i<j<m,           \\\\
	\ds\f{F_{2i}F_{2m-2i}-F_{2i-2}F_{2m-2i+1}}{F_{2m+1}} & \textrm{if} & 1<i=j<m,           \\\\
	\ds\f{2F_{2m-2j+1}}{F_{2m+1}}                        & \textrm{if} & 1=i<j<m,           \\\\
	\ds-\f{2F_{2i}}{F_{2m+1}}                            & \textrm{if} & 1<i<j=m,           \\\\
	\ds-\f2{F_{2m+1}}                                    & \textrm{if} & i=1,\textrm{ }j=m, \\\\ 
	\ds\f{F_{2m-2}}{F_{2m+1}}                            & \textrm{if} & i=j=1, m.          
\end{array}
\right.
\]
\[
S_{ij}=\left\{
\begin{array}{lll}
	\ds-\f{F_{2i}F_{2m-2j+2}}{F_{2m+1}}                   & \textrm{if} & i<j,               \\\\
	\ds\f{F_{2j-1}F_{2m-2i+1}}{F_{2m+1}}               & \textrm{if} & 1<j\le i,          \\\\
	\ds\f{F_{2m-2i+1}}{F_{2m+1}}                 & \textrm{if}  & 1=j\le i. \\\\
\end{array}
\right.
\]
where $F_i$ is the Fibonacci sequence defined by $F_0=0$, $F_1=1$ and $F_{i+2}=F_i+F_{i+1}$.

\subsection{Computational results}
\label{sec:computationalresults}

In this section we present two numerical experiments. In the first, numerical comparisons  between  Picard's  methods  \eqref{eq:pm},  \eqref{eq:pme} and semi-smooth Newton's method \eqref{eq:nm} for
solving Problem~\ref{prob:pp} will be provided. In the second one, we study the behavior Picard's  method~\eqref{eq:pme} solving the problem described in Section~\ref{sec:example}. All programs were implemented in MATLAB Version 7.11 64-bit and run on a $3.40 GHz$ Intel Core $i5-4670$ with $8.0GB$ of RAM. All MATLAB codes and generated data of this paper are available in \url{http://orizon.mat.ufg.br/pages/34449-publications}. 

General considerations:
\begin{itemize}
    \item In order to accurately measure the method's runtime for a problem, each of them was solved $10$ times and the runtime data collected. Then, we defined the corresponding  {\it method's  runtime for a problem} as the median of these measurements. 
\item We consider that the method converged to the solution and stopped the execution when, for some $k$, the condition 
$$
\frac{\|u-x_{k}\|}{\|u\|}<RelativeTolerance, 
$$ is satisfied. 
\end{itemize}

\subsubsection{Numerical experiment I}
\label{sec:computationalresults1}
In this experiment, we study the percentage of  problems for which a method was the fastest one (\textbf{efficiency}) to compare them. With the aim that methods \eqref{eq:pm},  \eqref{eq:pme} and \eqref{eq:nm} find solutions on $1000$ generated random test problems of dimension
$m=1000$, we construct the matrix $A$ (defining the simplicial cone  $K=A\R^m_+$) in each problem satisfying the condition \eqref{eq:asnm1}. 

 We assume that  a method is the fastest one for a problem,  if the corresponding runtime is less than or equal to $1.01$ times  {\it the best time} of all methods to find the solution. 

Each test problem was generated as  follows:
\begin{enumerate}
     \item To construct the matrix $A\in \R^{m\times m}$ satisfying the condition (\ref{eq:asnm1}), we first chose a random number $b$ from the standard uniform distribution on the open interval $(0,1/3)$. Then, we chose a random number $\bar{b}$ from the standard uniform distribution on the open interval $(0,b)$. We computed the matrices $S,V$ and $D$, respectively, from the singular value decomposition of a $m\times m$ generated real matrix containing random values drawn
from the uniform distribution on the interval $[-10^6,10^6]$. Finally we computed 
$$
A = S\left(\mbox{sqrt}\left(I+\frac{\bar{b}}{\nu}V\right)\right)D, 
$$
were  $\nu$ is the largest singular value of $V$  and  $\mbox{sqrt}(I+\frac{\bar{b}}{\nu}V)$ is the square root of the  diagonal matrix $I+\frac{\bar{b}}{\nu}V$. 
\item   We chose the solution $u\in \R^{m}$ containing random values drawn from the uniform distribution on the interval $[-10^6,10^6]$ and  computed $z\in \R^{m}$ from equation (\ref{equation}).  Finally we chose a  starting point $x_0\in \R^{m}$ containing random values drawn from the uniform distribution on the interval $[-10^6,10^6]$.
\end{enumerate}

In order to provide information for the analysis of the large test problems set considered, we use the performance profiles (see \cite{DolanMore2002}). The performance profile for a method is the cumulative distribution function for a performance metric. In this case we use the ratio of the method's runtime versus the best runtime of all of the methods as the performance metric. \textbf{Efficiency} can be checked in the value of the profile function at $1$.  

Figure~\ref{fig:1} shows the performance profiles of the three methods for different relative tolerance values. These graphs reveal that Picard's method  \eqref{eq:pm} was the most  efficient  for low and medium accuracy, while semi-smooth Newton's method (\ref{eq:nm}) was the most efficient for high accuracy requirements.  However, since semi-smooth Newton's method (\ref{eq:nm}) requires at each step the solution of a system of linear equations, which may become unreasonably expensive computationally as the problem dimension increases,  these results suggest that for large scale problems Picard's  method (\ref{eq:pm}) is recommended.

\begin{figure}[ht]
     \begin{center}
        \subfigure[RelativeTolerance=$10^{-7}$]{\includegraphics[width=0.3273\textwidth]{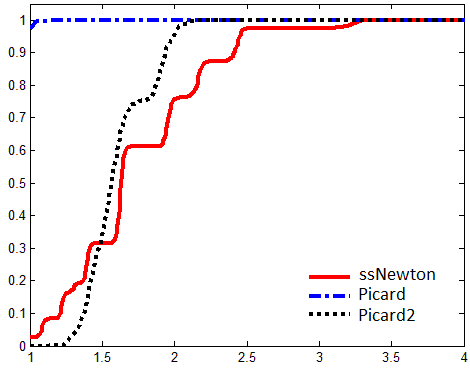}}
        \subfigure[RelativeTolerance=$10^{-10}$]{\includegraphics[width=0.3273\textwidth]{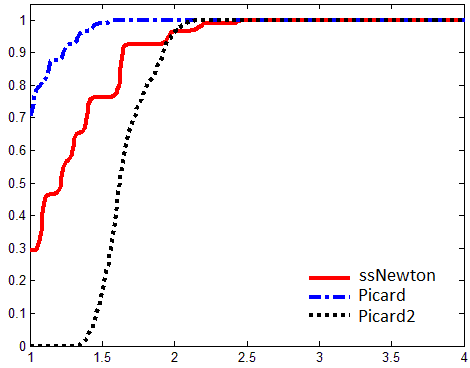}} 
        \subfigure[RelativeTolerance=$10^{-13}$]{\includegraphics[width=0.3273\textwidth]{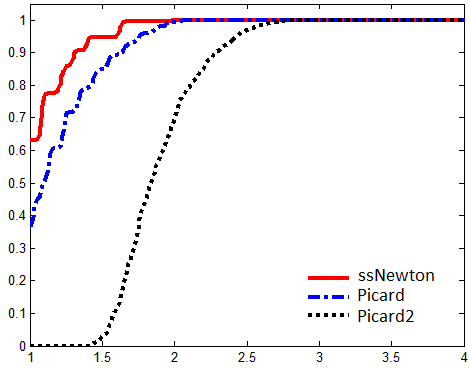}}\\
\end{center}
    \caption{Performance profiles on [1,4] for different accuracies.  Picard,  Picard2 and ssNewton denotes the methods  \eqref{eq:pm}, \eqref{eq:pme} and \eqref{eq:nm}, respectively.  }
   \label{fig:1}
\end{figure}
\noindent
On the other hand,  Picard's method \eqref{eq:pme}  was always the worst, except in the low accuracy case. It  can be inferred from   Figure~\ref{fig:2}, where   convergence mean time for each problem consumed by  Picard's method \eqref{eq:pme}  is  less than   consumed by semi-smooth Newton's method (\ref{eq:nm}).

Figure~\ref{fig:2} shows, as one would expect, the number of iterations on  semi-smooth Newton method is less than Picard's methods~\eqref{eq:pm} and \eqref{eq:pme} for solving the same set of problems and only for certain tolerance  semi-smooth Newton method consumes less time.

\begin{figure}[ht]
     \begin{center}
        \subfigure{\includegraphics[width=0.4927\textwidth]{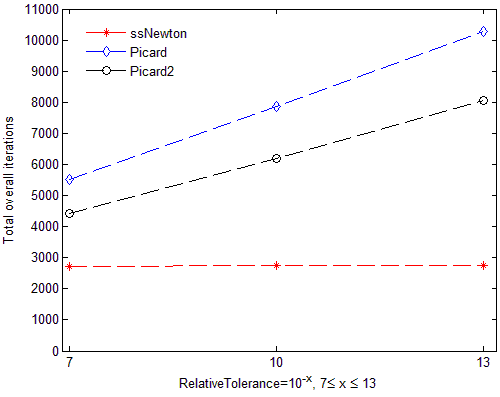}}
         \subfigure{\includegraphics[width=0.4927\textwidth]{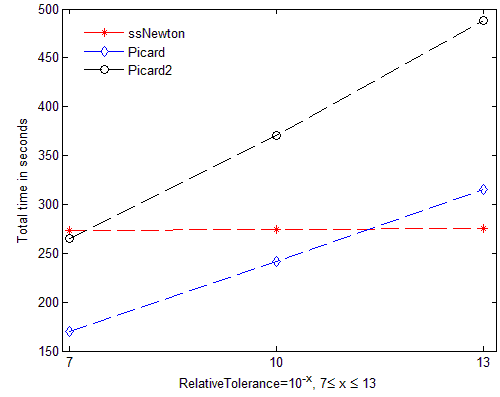}}
     \end{center}
   \caption{Total overall iterations and total time in seconds, performed and consumed, respectively by each method to solve the $1000$ test problems for different accuracies. Picard, Picard2 and ssNewton  denotes the methods  \eqref{eq:pm}, \eqref{eq:pme} and \eqref{eq:nm}, respectively. }
\label{fig:2}
\end{figure}

\subsubsection{Numerical experiment II}
\label{sec:computationalresults2}
In this experiment, we study the behavior of Picard's  method~\eqref{eq:pme} solving the problem described in Section~\ref{sec:example} on sets of $100$ generated random test problems of dimension
$m=100, 500, 1000, 1500, 2000$, respectively. 

Each $m-$dimensional test problem was generated as follows: We constructed the matrix $A$ (defining the simplicial cone  $K^{*}=A\R^m_+$) as is defined in Section~\ref{sec:example}. We chose the solution $u\in \R^{m}$, computed $z\in \R^{m}$ and chose a starting point $x_0\in \R^{m}$ as we described in the previous Section \ref{sec:computationalresults1}.

The computational results obtained are reported in Table~\ref{tab:example1}. From these, it can be noted that for the same dimension, to achieve higher accuracy, the method needs to perform a greater number of iterations and consequently consume more runtime. The same behavior occurs when, for the same accuracy, the dimension of the problem increases.

\begin{table}[htbp]
\centering
\renewcommand{\arraystretch}{1.2}
\begin{tabular}{|c|rrr|rrr|}
\hline
\multicolumn{1}{|c|}{Dimension m }&\multicolumn{3}{c|}{Total Iterations}&\multicolumn{3}{c|}{Total Time}\\\cline{1-7}
 $100$   &4927& 7475& 10036& \multicolumn{1}{|r}{1.096} &  1.624& 2.180\\
 $500$  &6613& 10333& 14055& 66.183& 103.411& 140.812\\
 $1000$  &8120& 12873& 17640& 449.507& 717.310& 984.274\\
 $1500$  &8159& 12924& 17732& 1358.698& 2151.743& 2952.247\\
 $2000$ & 8814& 14054& 19359& 3098.215& 4955.041& 6820.121\\\hline\hline
\multicolumn{1}{|c|}{Relative Tolerance}&\multicolumn{1}{c|}{$10^{-7}$}&\multicolumn{1}{c|}{$10^{-10}$}&\multicolumn{1}{c|}{$10^{-13}$}&\multicolumn{1}{c|}{$10^{-7}$}&\multicolumn{1}{c|}{$10^{-10}$}&\multicolumn{1}{c|}{$10^{-13}$}\\
\hline
\end{tabular}
\caption{Total overall iterations and total time in seconds, performed and consumed, respectively by Picard's method~\eqref{eq:pme} to solve the $100$ test problems  of each dimension for different accuracies.}
\label{tab:example1}
\end{table}

\section{Conclusions} \label{sec:conclusions}
In this paper we studied  the problem of   projection onto a  simplicial cone  which,  via  Moreau's decomposition theorem for projecting onto cones,  is reduced to 
finding the unique solution of a  nonsmooth system of equations.  Our main results show that,   under a mild assumption on the simplicial cone,   we can  apply Picard's method for finding a unique solution of  the  obtained  associated system and  that  the generated sequence converges linearly to the  solution   for  any starting point.  Note that in  Theorem~\ref{th:cp2} we do not make any assumption on the simplicial cone, on the other hand,  we have to solve a linear equation in each iteration.  It would be interesting to see whether the used technique can be applied  for finding  the projection onto  more general cones.   As has been shown 
in \cite{ujvari2007projection}, the problem of projection onto a   simplicial  cone is reduced to a certain type of linear complementarity problem (LCP).   Numerical comparisons   between  Picard's  methods  (\ref{eq:pm},\ref{eq:pme})   and semi-smooth Newton's method \eqref{eq:nm} for solving Problem~\ref{prob:pp} was  provided in Section~\ref{sec:crresult}. It would also be interesting  to compare these   methods with  the  methods proposed in 
\cite{EkartNemethNemeth2009,Morillas2005,ujvari2007projection} and  the Lemke's method for LCPs.

\section*{References}
\bibliographystyle{elsarticle-num}
\bibliography{simcoprojP}

\end{document}